\newcommand{\R}{{\mathbb R}}
\newcommand{\Z}{{\mathbb Z}}
\newcommand{\BV}{{\mathrm{BV}}}
\newcommand{\FT}{{\mathrm{FT}}}
\newcommand{\e}{\varepsilon}
\DeclareMathOperator*{\essinf}{ess\,inf}
\DeclareMathOperator{\dist}{dist}
\newcommand{\loc}{{\rm loc}}
\newtheorem{thm}{Theorem}
\newtheorem{prop}[thm]{Proposition}
\newtheorem{lem}[thm]{Lemma}
\theoremstyle{definition}
\newtheorem*{rem*}{Remark}
\title{On Lebesgue points of entropy solutions to the eikonal equation}
\date{\today}
\author{Xavier Lamy\thanks{Institut de Math\'ematiques de Toulouse; UMR 5219, Universit\'e de Toulouse; CNRS, UPS IMT, F-31062 Toulouse Cedex 9, France. Email: xlamy@math.univ-toulouse.fr.} \and Elio Marconi\thanks{Dipartimento di Matematica 'Tullio Levi Civita', Universit\`a di Padova, via Trieste 63, 35121 Padova (PD), Italy. Email: elio.marconi@unipd.it.}}
\begin{document}

\maketitle

\begin{abstract}
We consider entropy solutions to the eikonal equation $|\nabla u|=1$ in two space dimensions. These solutions are motivated by a class of variational problems and fail in general to have bounded variation. Nevertheless they share with BV functions, several of their fine properties: we show in particular that the set of non-Lebesgue points has co-dimension at least one.
\end{abstract}

\section{Introduction}
We consider an open set $\Omega\subset\R^2$ and $m\colon\Omega\to\R^2$ a solution of the eikonal equation
\begin{align}\label{e:eik}
|m|=1\text{ a.e., and }\nabla\cdot m=0\text{ in }\Omega.
\end{align}
We are interested in particular in solutions that arise as limits as $\e \to 0$ of vector fields $m_\e$ with equi-bounded energy $\sup_{\e>0} F_\e (m_\e,\Omega) < \infty$, where
\begin{align}\label{eq:AG}
&F_{\e}(m;\Omega)=\frac \e 2 \int_{\Omega}   |\nabla m|^2+ \frac 1{2\e}\int_\Omega (1-|m|^2)^2,\\
& m\colon\Omega\to \R^2,\qquad \nabla\cdot m =0,
\nonumber
\end{align}
are the functionals introduced by Aviles and Giga in \cite{AG86}.
 We refer to the introduction of \cite{JK00} for a description of several physical applications.

The notion of entropy, borrowed from the field of conservation laws, plays a fundamental role in the study of the singular limit as $\e\to 0$ of these functionals.
We say that a compactly supported function $\Phi \in C^\infty(\R^2,\R^2)$ is an \emph{entropy} for \eqref{e:eik} if for every open set $U$ and every smooth $m:U\to \R^2$ solving $\nabla\cdot m= 0$ and $|m|=1$ it holds $\nabla\cdot \Phi(m)=0$. 
It is shown in \cite{ADM,dkmo01} that functions with equi-bounded energy as $\e \to 0$ are pre-compact in $L^2(\Omega)$ and any limit is an entropy solution of \eqref{e:eik}: namely for every entropy $\Phi \in C^\infty(\R^2,\R^2)$ the distribution $\nabla\cdot \Phi(m)$ is a finite Radon measure.  
Remarkably, the same class of entropy solutions to \eqref{e:eik} contains the asymptotic domain of other families of functionals: see \cite{ARS, RS01} for two micromagnetics models.

It is shown in \cite{GL} that $m$ is an entropy solution if and only if the following kinetic equation  (introduced in \cite{jabinperthame01}) is satisfied:
\begin{align}\label{e:kin}
e^{is}\cdot\nabla_x \mathbf 1_{m(x)\cdot e^{is}>0} =\partial_s \sigma,\qquad \sigma\in\mathcal M_{\loc}(\Omega\times\R/2\pi\Z).
\end{align}
We denote by $\nu\in\mathcal M_{\loc}(\Omega)$ the entropy dissipation measure given by
\begin{align}\label{e:nu}
\nu(A)=|\sigma|(A\times\R/2\pi\Z),\qquad A\subset\Omega.
\end{align}
It is known \cite{ODL03} that $\mathcal H^1$-a.e. point $x\in\Omega$ at which $\nu(B_r(x))/r\to 0$ as $r\to 0^+$ is a vanishing mean oscillation (VMO) point of $m$, that is,
\begin{align*}
\fint_{B_r(x)}\left| m - \fint_{B_r(x)} m \right| \longrightarrow 0\quad\text{as }r\to 0^+.
\end{align*}
It is conjectured in \cite[\textit{Conjecture~1}(b')]{ODL03} that 
 $\mathcal H^1$-a.e. such point is in fact
 a Lebesgue point.
Our main result states that this conjecture is true under the additional assumption that $\nu(B_r(x))/r$ decays algebraically to 0. 

\begin{thm}\label{t:leb}
Let $m\colon\Omega\to\R^2$ be an entropy solution \eqref{e:kin} of the eikonal equation \eqref{e:eik}.
Then $\mathcal H^1$-a.e. $x\in\Omega$ such that $\lim_{r\to 0^+}\nu(B_r(x))/r^{1+a}=0$ 
for some $a>0$ is a Lebesgue point of $m$. 
In particular, the set of non Lebesgue points of $m$ has Hausdorff dimension at most 1.
\end{thm}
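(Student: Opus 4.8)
The plan is to analyze the kinetic formulation \eqref{e:kin} at scales approaching the point $x$, using the algebraic decay hypothesis $\nu(B_r(x))/r^{1+a}\to 0$ to control the entropy dissipation. I would first normalize by translating $x$ to the origin and work with the rescaled fields $m_r(y)=m(ry)$ on the unit ball $B_1$. The kinetic equation is scale-invariant in structure, and the rescaled dissipation measure $\sigma_r$ satisfies $|\sigma_r|(B_1\times\R/2\pi\Z)=\nu(B_r)/r\to 0$ (indeed $\le C r^a\to 0$). Thus along any sequence $r_k\to 0$, the fields $m_{r_k}$ are bounded in $L^\infty$, hence pre-compact in the weak-$*$ topology and, via the kinetic equation and averaging-type compactness, one expects strong $L^2_{\loc}$ compactness. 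Any limit $m_0$ is then an entropy solution with \emph{zero} dissipation, i.e.\ a solution of \eqref{e:kin} with $\sigma\equiv 0$, and such solutions are rigid: the indicator $\mathbf 1_{m_0\cdot e^{is}>0}$ transports freely along the direction $e^{is}$, forcing $m_0$ to be locally constant. Establishing that $m_0$ must be a single constant vector (the same for every blow-up sequence) is what would upgrade VMO to a genuine Lebesgue point.

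The key steps, in order, are as follows. \textbf{Step 1:} Rescale and record the decay $|\sigma_r|(B_1\times\R/2\pi\Z)\le C r^a$. \textbf{Step 2:} Prove strong $L^2_{\loc}(B_1)$ compactness of $\{m_r\}_{r}$ as $r\to 0$. Since $|m_r|=1$, weak-$*$ limits are automatic; the issue is ruling out oscillation, which I would handle by a velocity-averaging / compensated-compactness argument applied to \eqref{e:kin}, exploiting that the right-hand side $\partial_s\sigma_r$ becomes negligible. \textbf{Step 3:} Classify zero-dissipation entropy solutions: if $\sigma\equiv 0$ then $e^{is}\cdot\nabla_x\mathbf 1_{m\cdot e^{is}>0}=0$ for all $s$, which is a strong rigidity constraint forcing $m$ to be constant on $B_1$ (or at least continuous). \textbf{Step 4:} Quantify the approach. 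A qualitative blow-up would only show that \emph{some} subsequence converges to a constant; to obtain a true Lebesgue point one must show that the limit constant is independent of the sequence and that the convergence is controlled uniformly in $r$. This is where the \emph{algebraic} decay rate $r^a$ (rather than mere $o(r)$) should be essential: it provides a quantitative rate in a decay-of-oscillation iteration, i.e.\ an estimate of the form $\osc$ or mean oscillation of $m$ over $B_r$ decaying geometrically across dyadic scales, which sums to give convergence of the averages $\fint_{B_r}m$ to a limit and hence the Lebesgue-point property.

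I expect the main obstacle to be \textbf{Step 2 combined with Step 4}: promoting the compactness into a \emph{quantitative} decay estimate. The qualitative blow-up classification (Steps 1--3) should follow from existing structure theory for entropy solutions, but the conjecture's subtlety—why one needs the algebraic rate and cannot reach $\mathcal H^1$-a.e.\ point directly—lies in controlling how fast the mean oscillation decays as $r\to 0$. The natural strategy is a Campanato-type iteration: show that for a suitable dyadic ratio $\theta\in(0,1)$ the oscillation of $m$ on $B_{\theta r}$ is a definite fraction of that on $B_r$, plus an error controlled by $(\nu(B_r)/r^{1+a})^\beta$ for some $\beta>0$. Closing this iteration requires a careful compactness-and-contradiction argument (to produce the contraction factor $\theta$) together with the summability afforded by the algebraic decay of the error term, and reconciling the non-BV nature of $m$ with such an estimate is the technically delicate heart of the proof.

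Finally, the Hausdorff-dimension conclusion follows from the main statement together with the elementary observation that the set where $\nu(B_r(x))/r^{1+a}$ fails to vanish for every $a>0$ is contained in the set where $\limsup_{r\to 0}\nu(B_r(x))/r>0$, which by a standard density/covering argument for Radon measures has $\mathcal H^1$-measure (and hence dimension) at most $1$.
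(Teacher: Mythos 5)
Your overall strategy (blow up at the point, use the vanishing rescaled dissipation, classify the limits, then quantify) is a natural first instinct, but it has two genuine gaps, one of which is fatal as stated. First, Step 3 is false: zero-dissipation entropy solutions of the eikonal equation are \emph{not} locally constant. The vortex $m(x)=x^\perp/|x|$ satisfies $|m|=1$, $\nabla\cdot m=0$ with $\sigma\equiv 0$, yet is nowhere locally constant near the origin; the classification of zero-entropy-production states (Jabin--Otto--Perthame) is ``constant or vortex'', not ``constant''. This is precisely the obstruction the paper singles out: adapting the scalar argument of \cite{marconi22structure} only yields Proposition~\ref{p:weakdicho}, whose second alternative \eqref{e:nonVMOweak} is compatible with vortex-like behaviour and therefore cannot be excluded by a dissipation lower bound alone. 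The paper needs a second, independent mechanism (Proposition~\ref{p:nonVMOimprov}, proved via the Lagrangian representation) to upgrade \eqref{e:nonVMOweak} to \eqref{e:nonVMO}, which can then be ruled out by the VMO property of the point. Nothing in your outline excludes vortex-type blow-up limits, so the rigidity you rely on simply is not there.

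Second, Step 4 --- the only place where the algebraic rate $r^a$ and the Lebesgue-point conclusion would actually be produced --- is a description of what a proof must accomplish rather than a proof. The paper does not run a Campanato iteration with a fixed dyadic contraction ratio; it controls the oscillation quantity $h(r)$ of \eqref{e:h} directly through a dichotomy posed at the much larger scale $R=32r/(\delta h^2)$: if $h(r)\gg r^b$ with $b=a/(10+2a)$, then either $\nu(B_R)\gtrsim h^{11}r$, which is incompatible with the decay hypothesis since $R^{1+a}/(h^{8}r)\to 0$, or \eqref{e:nonVMO} holds, which contradicts the VMO property from \cite{ODL03}. The bound $h(r)=\mathcal O(r^b)$ together with VMO then gives the Lebesgue point via \cite[Lemma~4.6]{marconi22structure}. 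The ``careful compactness-and-contradiction argument'' you defer is exactly the content of Propositions~\ref{p:dicho}--\ref{p:nonVMOimprov}, and your outline offers no substitute for it. A minor further point: your covering argument for the dimension bound is misstated --- the set where $\nu(B_r(x))/r^{1+a}\not\to 0$ for every $a>0$ need not be contained in $\{\limsup_r \nu(B_r(x))/r>0\}$ (consider $\nu(B_r(x))\sim r/\log(1/r)$); one should instead bound the dimension of $\{\limsup_r\nu(B_r(x))/r^{1+a}>0\}$ by $1+a$ for each $a>0$ and let $a\to 0$.
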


Analogs of Theorem~\ref{t:leb} have been obtained previously in \cite{LO18} for Burgers' equation, and in \cite{marconi22structure} for general scalar conservations laws.
To prove Theorem~\ref{t:leb}
we follow the scheme laid out in \cite{marconi22structure},
where it is shown that oscillations 
of averages $\fint_{B_r(x)}u$ of the solution $u$  are controlled by the
entropy dissipation.
This, together with the VMO property, implies the Lebesgue point property.
However, a key feature for the argument of \cite{marconi22structure} 
 is that the solution $u$ takes values in the ordered set $\R$.
Here our solution $m$ takes values in $\mathbb S^1$, 
and adapting the argument of \cite{marconi22structure} is not enough to conclude (see Proposition~\ref{p:weakdicho}).
Our proof of Theorem~\ref{t:leb}
 relies instead on the following dichotomy: 
 either the oscillations of $\fint_{B_r(x)}m$ are 
 controlled by the entropy dissipation $\nu$, 
or $m$ takes very different values in large subsets of $B_{R}(x)$ -- 
this second alternative is ruled out by the VMO property.
That dichotomy is made quantitative in the next statement.

\begin{prop}\label{p:dicho}
Assume $B_1\subset\Omega$. Let $r\in (0,1/2)$ and 
\begin{align}\label{e:h}
h
=h(r)
&=\max_{x_1,x_2\in \overline B_{2r}} \left| \fint_{B_r(x_1)}m -\fint_{B_r(x_2)}m\right|.
\end{align}
There exist absolute constants $c,\delta>0$ such that, if
\begin{align*}
R=\frac{32 r}{\delta h^2}\leq 1,
\end{align*}
then either
\begin{align}\label{e:dissip}
\nu(B_{R})\geq c\, h^{11} r,
\end{align}
or there exist $s_0\in\R$ such that
\begin{align}\label{e:nonVMO}
\left|B_{R}\cap \left\lbrace m\cdot e^{is} \geq -\frac 12 \right\rbrace \right| \geq c\, R^2
\quad\text{ for }\dist(s,\lbrace s_0,s_0+\pi\rbrace) \leq \pi/4.
\end{align}
\end{prop}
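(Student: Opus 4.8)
My starting point is the layer--cake (kinetic) representation of $m$ associated with the family of indicators in \eqref{e:kin}. Writing $\chi_s(x)=\mathbf 1_{m(x)\cdot e^{is}>0}$ and using $\int_0^{2\pi}e^{is}\,\mathrm{sgn}(m\cdot e^{is})\,ds=4\,m$, one gets $m(x)=\tfrac12\int_0^{2\pi}e^{is}\chi_s(x)\,ds$ for a.e.\ $x$. Setting $g_s(x)=\fint_{B_r(x)}\chi_s$, the quantity in \eqref{e:h} becomes
\begin{align*}
\fint_{B_r(x_1)}m-\fint_{B_r(x_2)}m=\frac12\int_0^{2\pi}e^{is}\bigl(g_s(x_1)-g_s(x_2)\bigr)\,ds .
\end{align*}
Let $\alpha$ be the argument of the maximizing difference vector; I would take $s_0=\alpha$ in \eqref{e:nonVMO}. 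Then $h=\tfrac12\int_0^{2\pi}\cos(s-\alpha)\,(g_s(x_1)-g_s(x_2))\,ds$, and since $g_{s+\pi}=1-g_s$ for a.e.\ $s$, the contributions near $\alpha$ and near $\alpha+\pi$ agree. A routine extraction then produces an angular window of fixed size around $\alpha$ and a subset $S$ of it with $|S|\gtrsim h$ on which $g_s(x_1)-g_s(x_2)\gtrsim h$; equivalently, for every $s\in S$ a definite fraction of $B_r(x_1)$ lies in $\{m\cdot e^{is}>0\}$ while a definite fraction of $B_r(x_2)$ lies in $\{m\cdot e^{is}\le 0\}$.

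The heart of the matter is to transfer this separation, seen on the central balls $B_r(x_i)\subset B_{3r}$, to the macroscopic ball $B_R$; this is where the kinetic equation \eqref{e:kin} is used. For fixed $s$ the set $\{m\cdot e^{is}>0\}=\{\chi_s=1\}$ is transported in the direction $e^{is}$ with source $\partial_s\sigma$, so heuristically $m$ is constant along segments of direction $m^\perp$, and \eqref{e:nonVMO} expresses that these segments are nearly parallel (a laminar, two--phase pattern with both phases $\pm e^{is_0}$ present in bulk). Quantitatively, I would establish a translation estimate of the form
\begin{align*}
\int_{S}\bigl|B_R\cap\bigl(\{\chi_s=1\}\triangle(\{\chi_s=1\}-t\,e^{is})\bigr)\bigr|\,ds\lesssim R\,\nu(B_R)\qquad (|t|\lesssim R),
\end{align*}
integrating \eqref{e:kin} in $s\in S$ so that the awkward $\partial_s\sigma$ telescopes into $\sigma$ and is bounded by $\nu$ through \eqref{e:nu}. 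The choice $R=32r/(\delta h^2)$ fixes the length over which this transport must be propagated: it is exactly the scale at which the microscopic oscillation $h$ at scale $r$ can be amplified to a bulk phenomenon at scale $R$.

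The dichotomy is then a competition between rigidity and cost. If $\nu(B_R)$ is small, the transported configuration is rigid: the two--phase separation detected on $B_r(x_1)$ and $B_r(x_2)$ persists along the nearly parallel segments across $B_R$, and both phases occupy a fixed fraction of $B_R$; for $s$ in the bands around $\alpha$ and $\alpha+\pi$ this yields $|B_R\cap\{m\cdot e^{is}\ge-\tfrac12\}|\gtrsim R^2$, which is \eqref{e:nonVMO}. If instead this rigidity fails -- the segments must bend, so that one phase fails to fill its share -- then the defect is charged to $\nu$. Because the entropy dissipation of an Aviles--Giga transition scales superlinearly (cubically) in the jump height, even the small oscillation $h$ forces a definite amount of dissipation once laminarity is lost; tracking these losses across the scale ratio $R/r=32/(\delta h^2)$, together with the band size $|S|\gtrsim h$ and the lower bounds $g_s(x_1)-g_s(x_2)\gtrsim h$, accumulates into the bound $\nu(B_R)\gtrsim h^{11}r$ of \eqref{e:dissip}. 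The exponent $11$ is not meant to be optimal; it is the outcome of chaining these polynomial--in--$h$ estimates.

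The step I expect to be the genuine obstacle is the quantitative transport across the large scale gap $R\gg r$: one must control not only the variation of $g_s$ along $e^{is}$ (which the kinetic equation handles) but also its transverse variation, and convert a defect of order $h$ at scale $r$ into either the bulk lower bound $cR^2$ or the dissipation $c\,h^{11}r$ without losing the antipodal pairing of the two bands. This vectorial, two--direction alternative ($s_0$ together with $s_0+\pi$) is precisely what is absent in the scalar theory of \cite{marconi22structure}, where the ordering of $\R$ makes a single--sided comparison suffice; reproducing it for $\mathbb S^1$--valued $m$, and in particular ruling out the focusing configurations that a naive adaptation (Proposition~\ref{p:weakdicho}) cannot exclude, is the crux.
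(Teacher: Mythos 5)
Your sketch reproduces, in outline, the paper's proof of the \emph{weak} dichotomy (Proposition~\ref{p:weakdicho}): the kinetic/layer--cake representation $2\fint_{B_r(x_j)}m=\int e^{is}\rho_j(s)\,ds$, the extraction of a direction $s$ (outside a set where the angular variation of the $\rho_j$ concentrates) with $\rho_1(s)-\rho_2(s)\gtrsim h$, and a translation estimate comparing $E_m$ with its free transport, paid for by $\nu$ (this is Lemma~\ref{l:FT}). That part is sound and is essentially the paper's first ingredient. The gap is in the final step, which you yourself flag as ``the genuine obstacle'' but do not resolve: the rigidity/cost competition you describe cannot produce the conclusion \eqref{e:nonVMO}. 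Free transport preserves Lebesgue measure, so transporting the configuration detected on $B_r(x_1)$, $B_r(x_2)$ --- a set of measure of order $c|B_r|\sim h^3r^2$ after restricting to the angular window $I_c$ --- to the intersection point $\hat z$ can only certify sets of measure $O(hr^2)$ on which $\pm m\cdot m_0\geq 1/2$, and only for the single direction $m_0$. This is exactly \eqref{e:nonVMOweak}. Since $R^2=(32r/(\delta h^2))^2\sim r^2/h^4\gg hr^2$, there is a gap of order $h^{-5}$ between what this mechanism yields and the bound $c\,R^2$ required in \eqref{e:nonVMO}, and nothing in your sketch bridges it, nor extends the estimate to the full range $\dist(s,\{s_0,s_0+\pi\})\leq\pi/4$.

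The paper closes this gap with a second, genuinely different ingredient (Proposition~\ref{p:nonVMOimprov}) based on the Lagrangian representation of $m$: Lemma~\ref{l:dichocurve} shows that either the Lagrangian curves passing through $X_\pm\times I_\eta(\hat s)$ dissipate at least $\eta^2|X_\pm|/R$, or one of them crosses $B_R$ while turning by less than $\eta$; and then \cite[Lemma~5.2]{stab} converts such an almost--straight characteristic into either $\nu(B_R)\gtrsim R$ or the \emph{bulk} density bound $|B_R\cap\{m\cdot e^{i\hat s}\geq -1/2\}|\gtrsim R^2$, for each $\hat s$ in the two antipodal arcs. The $cR^2$ in \eqref{e:nonVMO} thus does not come from transporting mass from the small balls at all, but from a structural property of entropy solutions along low--dissipation characteristics. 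Without this (or an equivalent substitute), your argument proves only Proposition~\ref{p:weakdicho}, which the paper explicitly notes is insufficient to rule out vortex--like configurations and hence to conclude Theorem~\ref{t:leb}.
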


Here and in the rest of the article, we denote by $|A|$ the Lebesgue measure of  a measurable set $A\subset\R^d$. 
Theorem~\ref{t:leb} is a rather direct consequence of Proposition~\ref{p:dicho}, as we explain now.

\begin{proof}[Proof of Theorem~\ref{t:leb}]
Let $x\in\Omega$ be a VMO point of $m$ such that $\nu(B_r(x))/r^{1+a}\to 0$ for some $a>0$.
Translating and rescaling we assume without loss of generality that $x=0$ and $B_1\subset\Omega$. 
We claim that $h(r)= \mathcal O(r^b)$ for $b= a/(10+2a)>0$. 
This, together with 
the fact that $0$ is a VMO point of $m$, 
implies that $0$ is a Lebesgue point (see \cite[Lemma~4.6]{marconi22structure}).
To prove that $h(r)=\mathcal O(r^b)$ we argue by contradiction and assume that $h(r)/r^b\to\infty$ along a sequence $r\to 0^+$.
Then, along the same sequence,
\begin{align*}
&R =\frac{32r}{\delta h^2}=\frac {32} \delta r^{1-2b}\left(\frac{r^b}{h}\right)^2\to 0\quad\text{because }b < \frac 12,
\\
\text{ and }&\frac{R^{1+a}}{h^8r}=\frac{32^{1+a}}{\delta^{1+a}}\left(\frac{r^b}{h}\right)^{10+2a}\to 0.
\end{align*}
Therefore, applying Proposition~\ref{p:dicho} along the sequence $R\to 0$, the condition \eqref{e:dissip} cannot be satisfied because $\nu(B_R)/R^{1+a}\to 0$, so we have \eqref{e:nonVMO}. This contradicts the VMO property: for all small enough $R$, the projection $z_R\in\mathbb S^1$ of $\fint_{B_R}m$ onto $\mathbb S^1$ satisfies 
\begin{align}\label{e:zR}
\left|B_R \cap \lbrace |m-z_R|\geq \pi/12\rbrace \right| \leq \frac c 2 R^2.
\end{align}
But one can choose $s\in\R$ such that $\dist(s,\lbrace s_0,s_0+\pi\rbrace)\leq\pi/4$ and
\begin{align*}
z \cdot e^{is}\geq -\frac 12\quad \Longrightarrow \quad |z-z_R|\geq \pi/12,
\end{align*}
for any $z\in \mathbb S^1$ (if $z_R=e^{is_R}$, any $s\in [s_R + 3\pi/4,s_R +5\pi/4]$ has that property). 
According to \eqref{e:nonVMO} this implies
$|B_R \cap \lbrace |m-z_R|\geq \pi/12\rbrace | \geq cR^2$, 
in contradiction with \eqref{e:zR}. 
Hence we have proved that $x$ is a Lebesgue point. 
The estimate on the Hausdorff dimension of non Lebesgue points follows via a covering argument (see e.g. \cite[Theorem~2.56]{ambrosio}).
\end{proof}

The proof of Proposition~\ref{p:dicho} has two main ingredients.
The first ingredient consists in adapting the arguments of \cite{marconi22structure} to prove a
dichotomy similar to Proposition~\ref{p:dicho}, 
but where the second option \eqref{e:nonVMO} is replaced by a statement which is not strong enough to conclude.

\begin{prop}\label{p:weakdicho}
Let $r\in (0,1/2)$ and $h$ be as in Proposition~\ref{p:dicho}.
There exist absolute constants $c,\delta>0$ such that, if
$R=32r/(\delta h^2)\leq 1$,
then we have either $\nu(B_{R})\geq c h^{11} r$, or
\begin{align}\label{e:nonVMOweak}
\left|B_{R/2}\cap \left\lbrace m\cdot m_0 \geq  \frac 12 \right\rbrace \right| \geq c hr^2 
\quad
\text{and }
\quad
\left|B_{R/2}\cap \left\lbrace m\cdot m_0 \leq -\frac 12 \right\rbrace \right| \geq c hr^2,
\end{align}
for some $m_0\in\mathbb S^1$.
\end{prop}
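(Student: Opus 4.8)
The plan is to follow the scheme of \cite{marconi22structure}, transposed to the kinetic formulation \eqref{e:kin}. The starting point is the representation of $m$ through its half–plane densities. For $s\in\R/2\pi\Z$ set $\chi(x,s)=\mathbf 1_{m(x)\cdot e^{is}>0}$, so that \eqref{e:kin} reads $e^{is}\cdot\nabla_x\chi(\cdot,s)=\partial_s\sigma$, and note the elementary identity $m(x)=\frac12\int_0^{2\pi}e^{is}\chi(x,s)\,ds$. Averaging over $B_r(x)$ and writing $g(x,s)=\fint_{B_r(x)}\chi(\cdot,s)\in[0,1]$ gives $\fint_{B_r(x)}m=\frac12\int_0^{2\pi}e^{is}g(x,s)\,ds$, together with the antipodal relation $g(x,s+\pi)=1-g(x,s)$ a.e. First I would use this to convert the oscillation $h$ of \eqref{e:h} into oscillation of the densities: if $x_1,x_2$ realise the maximum in \eqref{e:h}, then $h=\frac12\lt|\int_0^{2\pi}e^{is}\,(g(x_1,s)-g(x_2,s))\,ds\rt|$, so a Chebyshev argument produces a set $S\subset\R/2\pi\Z$ with $|S|\gtrsim h$ on which $|g(x_1,s)-g(x_2,s)|\gtrsim h$.

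The core is then a one–directional propagation estimate. For fixed $s$ the relation $e^{is}\cdot\nabla_x\chi(\cdot,s)=\partial_s\sigma$ says that $\chi(\cdot,s)$ is constant along the lines $x+\R e^{is}$ up to the defect $\partial_s\sigma$; hence the density $g(\cdot,s)$ varies in the direction $e^{is}$ only by an amount controlled by $\sigma$. After mollifying in the $s$–variable to give the slice a meaning (this is where $\nu$ rather than $\sigma$ enters, through \eqref{e:nu}), one obtains an estimate of the form: the variation of $g(\cdot,s)$ along a displacement of length $\lesssim R$ in the direction $e^{is}$ is either $\lesssim h$ or is paid for by a definite amount of dissipation in $B_R$. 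Because $x_1,x_2$ may be separated in a direction transverse to $e^{is}$, this displacement has to be realised inside the enlarged ball $B_R$, and this is exactly the role of the radius $R=32r/(\delta h^2)$: it gives enough room to slide $B_r$ along characteristics while staying inside $B_R$, and the factor $h^{-2}$ is what makes the transverse (geometric) contribution of size $\gtrsim h$ dominate the longitudinal one.

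From here the dichotomy is extracted by a covering/summation argument over the directions $s\in S$. If for a positive fraction of $s\in S$ the density difference is charged to the dissipation, then summing the lower bounds over $S$ (of measure $\gtrsim h$) and keeping track of the normalisations yields $\nu(B_R)\gtrsim h^{11}r$, the large power arising from chaining the several lossy conversions (the Chebyshev step in $s$, the $r^{-2}$ in the density estimate, and the $h^{-2}$ in $R$). Otherwise, for most $s\in S$ the set $\{m\cdot e^{is}>0\}$ behaves like a half–plane whose boundary crosses $B_{O(r)}$, and since these half–planes vary slowly with $s$ their traces on $B_{R/2}$ overlap; intersecting the conditions $m\cdot e^{is}>0$ over a small arc of directions around a fixed $s_0\in S$ then forces a set of measure $\gtrsim hr^2$ on which $m\cdot m_0\ge\frac12$ with $m_0=e^{is_0}$, while the antipodal relation simultaneously produces a set of comparable measure on which $m\cdot m_0\le-\frac12$. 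This is precisely \eqref{e:nonVMOweak}.

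Main obstacle: I expect the delicate point to be the propagation estimate, namely giving rigorous meaning to the transport of $\chi(\cdot,s)$ at a fixed direction $s$ when $\sigma$ is only a measure, and quantifying the variation of $g(\cdot,s)$ by $\nu$ uniformly enough to survive integration over $s\in S$. This is the step that has to be imported and adapted from \cite{marconi22structure}, where the corresponding estimate is available for the real–valued kinetic variable; here the only structural change is that the ordering of $\R$ is replaced by the circle of directions, which is harmless at the level of \eqref{e:nonVMOweak} (the genuine difficulty of the $\mathbb S^1$–valued case being deferred to the upgrade from \eqref{e:nonVMOweak} to the stronger conclusion). The bookkeeping that produces the exponent in $h^{11}r$ and matches the threshold $R=32r/(\delta h^2)$ is then routine but must be carried out carefully, since these exponents feed directly into the proof of Theorem~\ref{t:leb}.
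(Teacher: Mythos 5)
Your overall skeleton matches the paper's: the half-circle densities $\rho_j(s)=g(x_j,s)$, the identity $\fint_{B_r(x_j)}m=\frac12\int e^{is}\rho_j(s)\,ds$, a Chebyshev-type selection of directions where $|\rho_1-\rho_2|\gtrsim h$, and a propagation estimate quantifying by $\nu$ the failure of $\mathbf 1_{E_m}$ to be invariant under free transport along $e^{is}$ (this is exactly the paper's Lemma~\ref{l:FT}). The dissipation alternative and the exponent bookkeeping are in the right spirit, although the paper works with a single good direction and an $s$-interval of width $\sim\delta h^3$ rather than summing over a set of directions of measure $\gtrsim h$.

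The genuine gap is in the mechanism you propose for the second alternative. Intersecting the conditions $m\cdot e^{is}>0$ over a small arc of directions around $s_0$ only constrains $m$ to slightly less than the half-circle centred at $e^{is_0}$: if the arc has half-width $\alpha$, one gets $m\cdot e^{is_0}>\sin\alpha$, and since the density information survives only on an interval of width $\sim\delta h^2\ll1$, this is nowhere near $m\cdot m_0\geq 1/2$. To pin $m$ to a short arc one must intersect two half-circle conditions at \emph{nearly antipodal} directions, and this is what the paper does: from $\rho_1(s)-\rho_2(s)\geq h/2\pi$ and the antipodal identity $\rho_2(s+\pi)=1-\rho_2(s)$ one gets $\rho_1(s_1)+\rho_2(s_2)\geq 1+h/4\pi$ for $s_1\approx s$ and $s_2\approx s+\pi$ with the angle between $e^{is_1}$ and $e^{is_2}$ equal to $\pi-\delta h^2$; the balls $B_r(x_1)$ and $B_r(x_2)$ are transported along $e^{is_1}$ and $e^{is_2}$ respectively to a common ball $B_r(\hat z)$ (the two lines meet inside $B_{R/2}$ precisely because their directions differ by at least $\delta h^2$ modulo $\pi$ and $|t_j|\leq 8r/(\delta h^2)$); since the two transported densities sum to more than $1$, the projections overlap in measure $\gtrsim h r^2$, and on the overlap $m$ lies simultaneously in two half-circles whose intersection is an arc of length $O(\delta h^2)$ around $e^{is_0}$ with $s_0\approx s-\pi/2$. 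Note in particular that the correct $m_0$ is \emph{perpendicular} to the selected kinetic direction $s$, not equal to it as in your sketch, and that the set with $m\cdot m_0\leq -1/2$ is obtained by rerunning the argument with the other pairing of nearly antipodal intervals, not as an automatic by-product of the antipodal relation. Finally, your selection step must also retain a smallness control on $|D\rho_1|+|D\rho_2|$ over the interval $I_{\delta h^2}(s)$ (the paper's bad set $S$), since the two transport directions have to be displaced by $\delta h^2$ relative to each other without losing the lower bound $h/4\pi$ on the density excess.
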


The main idea behind the argument in \cite{marconi22structure} is that a large value of $h$ implies the existence of a configuration which would be impossible in the absence of entropy dissipation. In the presence of dissipation, such configuration  provides a lower bound on the dissipation, and there is no dichotomy.
Here instead, not all configurations created by large values of $h$ can be ruled out in the absence of dissipation:
 in particular the vortex solution $m(x)= x^\perp / |x|$ has zero dissipation but the values of $h(r)$ around the origin are not vanishing.
This is reflected in the second alternative \eqref{e:nonVMOweak} of the dichotomy.

The second ingredient in our proof of Proposition~\ref{p:dicho} consists in using the methods developed in \cite{marconi21micromag,marconi21ellipse,CHLM22,stab} in order to pass from \eqref{e:nonVMOweak} to \eqref{e:nonVMO}.

\begin{prop}\label{p:nonVMOimprov}
Let $m_0=e^{is_0}\in\mathbb S^1$, and $R>0$ such that $B_R\subset\Omega$. Then we have either
\begin{align}\label{e:dissipXpm}
\nu(B_{R})\geq \frac{c}{R} \min (|X_+|,|X_-|),\qquad
X_\pm
= B_{R/2}\cap \lbrace \pm m\cdot m_0\geq 1/2 \rbrace,
\end{align}
or $\nu(B_R)\geq c R$, or \eqref{e:nonVMO}, for some absolute constant $c>0$.
\end{prop}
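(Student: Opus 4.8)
The plan is to prove the contrapositive: assuming that both dissipation alternatives fail, namely $\nu(B_R)<\tfrac{c}{R}\min(|X_+|,|X_-|)$ and $\nu(B_R)<cR$, I would deduce \eqref{e:nonVMO}. By symmetry I assume $|X_+|\le|X_-|$. First I would record an elementary reduction of the target. The set $\{m\cdot e^{is}\ge-\tfrac12\}$ is the preimage under $m$ of the arc of half-width $2\pi/3$ centered at $e^{is}$; as $s$ runs over the fan $\dist(s,\{\bar s,\bar s+\pi\})\le\pi/4$, the intersection of these arcs is a fixed arc of half-width $5\pi/12$ around $e^{i\bar s}$ (and around its antipode). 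Hence \eqref{e:nonVMO} holds as soon as I can produce a direction $\bar s$ such that the law of $m$ on $B_R$ — the push-forward of Lebesgue measure under $x\mapsto m(x)$ — charges with mass $\ge cR^2$ the arc of half-width $5\pi/12$ around $e^{i\bar s}$ and around $-e^{i\bar s}$; more flexibly, it suffices that this law not be concentrated in a narrow cone, i.e.\ that it put mass $\gtrsim R^2$ on a range of directions connecting $m_0$ to $-m_0$. The natural choice will be $\bar s=s_0$.

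The geometric mechanism is a dichotomy between jumps and rotation, localized at the cone $C=\{z\in\mathbb S^1:z\cdot m_0\ge\tfrac12\}$, whose preimage contains $X_+$. Writing $m=e^{i\theta}$, I would estimate the directional perimeter $P_{m_0}(m^{-1}(C))=|m_0\cdot\nabla\mathbf 1_{m^{-1}(C)}|(B_R)$ in the direction $m_0$. On one hand, a one-dimensional slicing in the direction $m_0$ bounds this perimeter from below by the width of $X_+$ transverse to its characteristics (the lines in direction $m_0^\perp$ along which the values $\pm m_0$ are transported), and hence by $\gtrsim|X_+|/R$. On the other hand, $\partial^* m^{-1}(C)$ splits into a jump part (where $\theta$ jumps across the cone edges $s_0\pm\pi/3$) and a continuous part (the level sets $\{\theta=s_0\pm\pi/3\}$). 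Using the entropy-flux control furnished by the kinetic equation \eqref{e:kin} — for every entropy $\Phi$ the measure $\nabla\cdot\Phi(m)$ is dominated by $\nu$ — I would charge the jump part to $C\nu(B_R)$. This yields the alternative: either the jump part dominates, in which case $\nu(B_R)\gtrsim|X_+|/R$, which is \eqref{e:dissipXpm}; or the continuous part dominates, so that the intermediate values $e^{i(s_0\pm\pi/3)}$ are attained on level sets of $\mathcal H^1$-measure $\gtrsim|X_+|/R$.

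In the second case I would invoke the rarefaction structure, which is where the methods of \cite{marconi21micromag,marconi21ellipse,CHLM22,stab} enter. Passing continuously from values near $m_0$ (on $X_+\subset B_{R/2}$) to values near $-m_0$ (on $X_-\subset B_{R/2}$) forces $\theta$ to rotate by $\pi$; any non-monotone (compressive) part of this rotation is charged to $\nu$, so when $\nu(B_R)$ is small the rotation is essentially monotone, i.e.\ a rarefaction fan. By the eikonal characteristic geometry the characteristics carrying the fan's intermediate values spread out, and since $X_\pm$ sit well inside $B_{R/2}$ the fan has the full annulus $B_R\setminus B_{R/2}$ to develop; it therefore sweeps a region of measure $\gtrsim R^2$ on which $m$ takes a whole range of intermediate directions between $m_0$ and $-m_0$. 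This spreads the law of $m$ over an arc reaching both $m_0$ and $-m_0$ with mass $\gtrsim R^2$, which by the reduction of the first paragraph gives \eqref{e:nonVMO}. The borderline regime, in which $\min(|X_+|,|X_-|)$ is already of order $R^2$ or the fan is blocked by a shock spanning a definite fraction of $B_R$, is exactly the one producing the third alternative $\nu(B_R)\ge cR$.

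I expect the main obstacle to be the quantitative amplification in the third paragraph: converting \enquote{a level set of intermediate values of length $\gtrsim|X_+|/R$ together with small dissipation} into \enquote{swept mass $\gtrsim R^2$} with an absolute constant, while keeping $c$ and $\delta$ compatible with the two discarded dissipation thresholds. This requires controlling how the rarefaction characteristics can focus or be destroyed and a careful book-keeping against $\nu$, for which I would rely on the Lagrangian representation of entropy solutions developed in the cited works rather than on classical $\BV$ trace theory, since $m$ need not be $\BV$. A secondary technical point is the rigorous meaning of the jump/continuous splitting and of the charge \enquote{jump part $\lesssim\nu$}, where small jumps across the cone edge must be handled by integrating over a family of cone angles so that every transition is genuinely charged by the dissipation measure.
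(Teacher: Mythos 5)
Your overall architecture (dichotomy between dissipation and spreading of the values of $m$, plus the reduction of \eqref{e:nonVMO} to charging arcs of half-width $5\pi/12$ around $e^{is_0}$ and $-e^{is_0}$) matches the spirit of the paper, but the two steps that carry the quantitative weight have genuine gaps. First, your directional-perimeter argument in the second paragraph presupposes that $m^{-1}(C)$, the preimage of the arc $C=\{z\cdot m_0\ge 1/2\}$, has locally finite perimeter so that $\partial^* m^{-1}(C)$ splits into a jump part and a continuous part; this is exactly the $\BV$ structure that entropy solutions are \emph{not} known to have (the paper stresses this in the abstract and introduction). Moreover, the kinetic equation \eqref{e:kin} controls only the derivative of $\mathbf 1_{m\cdot e^{is}>0}$ along the single direction $e^{is}$ (it equals $\partial_s\sigma$); it gives no bound on the perimeter of $m^{-1}(C)$ in a fixed direction $m_0$, nor a jump/diffuse decomposition of it, so the claim that the jump part is dominated by $\nu(B_R)$ is unsupported. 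Integrating over a family of cone angles, as you suggest, does not repair this: what you would control is again only directional derivatives $e^{is}\cdot\nabla_x$ for varying $s$, not the transverse variation needed for your slicing lower bound $\gtrsim |X_+|/R$.

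Second, the amplification step you flag as the \enquote{main obstacle} -- converting intermediate values plus small dissipation into a swept region of measure $\gtrsim R^2$ -- is precisely what the paper delegates to \cite[Lemma~5.2]{stab}, but that lemma takes as input an \emph{almost-straight Lagrangian curve} entering $B_{R/2}\times I_\eta(\hat s)$, not a level set of intermediate values. The paper manufactures this input with Lemma~\ref{l:dichocurve}: using the Lagrangian representation, the disintegration \eqref{e:disint} of $\nu$ along curves, and the elementary fact that a characteristic curve confined to $B_R$ for a time $\ge 4R$ must dissipate at least $1/2$, one gets that either $\nu(B_R)\gtrsim \eta^2|X_+|/R$ or some curve crosses $B_{R/2}\times I_\eta(\hat s)$ with $\mu_\gamma\le\eta$ on the relevant component; this is then run for every $\hat s$ in the fan. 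Your proposal has no substitute for this selection mechanism, and without it the rarefaction-fan picture in your third paragraph remains heuristic. To complete a proof along the paper's lines you would replace the perimeter argument entirely by this curve-selection dichotomy and then invoke \cite[Lemma~5.2]{stab} direction by direction.
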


Proposition~\ref{p:dicho} follows readily from Proposition~\ref{p:weakdicho} and Proposition~\ref{p:nonVMOimprov}. 
Thanks to Proposition~\ref{p:weakdicho}, we know indeed that either $\nu(B_R)\geq c h^{11}r$, in which case we are done, or estimate \eqref{e:nonVMOweak} is valid.
But according to Proposition~\ref{p:nonVMOimprov}, if \eqref{e:nonVMOweak} is satisfied, then we have either $\nu(B_R)\geq c hr^2/R \geq c h^{11} r$, or $\nu(B_R)\geq c R\geq c h^{11}r$, or \eqref{e:nonVMO}. 
In all cases, Proposition~\ref{p:dicho} is verified.

The proofs of Proposition~\ref{p:weakdicho} and Proposition~\ref{p:nonVMOimprov} are presented in 
 Section~\ref{s:weakdicho} and Section~\ref{s:nonVMOimprov}.
 
 \subsection*{Notations.} We denote by $|A|$ the Lebesgue measure of a set $A\subset\R^d$. We use the symbol $\gtrsim$ to signify inequality up to an absolute mutliplicative constant.
 
 \subsection*{Acknowledgements.} 
 Part of this work was completed during X.L.'s stay at the EPFL's Institute of Mathematics, which he thanks for their hospitality.  X.L. received  support  from ANR project ANR-22-CE40-0006-01.
E.M. acknowledges the support received from the SNF Grant 182565 and the European Union's Horizon 2020 research and innovation program under the Marie Sk\l odowska-Curie grant No. 101025032.

\section{Proof of Proposition~\ref{p:weakdicho}}\label{s:weakdicho}

Let $x_1,x_2$ attain the maximum in the definition \eqref{e:h} of $h$, and define, for $j=1,2$, $\rho_j(s)$ as the proportion of points  $x\in B_r(x_j)$ at which $m(x)$ lies in the semi-circle of direction $e^{is}$, that is, for every $s\in\R/2\pi\Z$, we set
\begin{align*}
\rho_j(s)&=\frac{1}{|B_r|}
\left|
B_r(x_j)\cap
\left\lbrace
m\cdot e^{is}>0
\right\rbrace\right| = \frac1{|B_r|}\int_{B_r(x_j)}\mathbf{1}_{E_m} (x,s) dx,
\end{align*}
where
\begin{align}\label{e:def_Em}
E_m=\left\lbrace (x,s)\in\Omega\times\R/2\pi\Z\colon m(x)\cdot e^{is}>0\right\rbrace.
\end{align}

Note that $|\rho_j|\leq 1$ and, since for every $x \in \Omega$ it holds $|D_s\mathbf{1}_{E_m} (x,\cdot)| ( \R/2\pi \Z) =2$,  then  $\rho_j\in BV(\R/2\pi \Z)$ with $|D\rho_j|(\R/2\pi\Z)\leq 2$.
Moreover, by Fubini theorem, these functions satisfy the identities
 \begin{align*}
 \int_{\mathbb R/2\pi\Z}e^{is}\rho_j(s)\, ds = \int_{\mathbb R/2\pi\Z} \fint_{B_r(x_j)}\mathbf{1}_{E_m} (x,s) e^{is} dx ds =
2 \fint_{B_r(x_j)}m(x)\, dx.
 \end{align*}
 For $s\in\R$ and $\rho>0$ we denote by $I_\rho(s)$ the segment
 \begin{align*}
 I_\rho(s)=[s-\rho,s+\rho].
 \end{align*} 
For a small enough absolute constant $\delta\in (0,1)$, the subset $S\subset \R/2\pi\Z$ given by
\begin{align*}
S=
\left\lbrace
s\in\R/2\pi\Z\colon
(|D\rho_1|+|D\rho_2|)(I_{\delta h^2}(s))\geq \frac h {4\pi}
\right\rbrace,
\end{align*}
satisfies $|S|\leq  h /2$ (as
follows e.g. from a Besicovitch covering argument).
Thus  we have
\begin{align*}
h &=\frac 12\left|
\int_{\mathbb R/2\pi\Z}e^{is}\rho_1(s)\, ds -\int_{\R/2\pi\Z}e^{is}\rho_2(s)\, ds
\right|
\leq 
\frac 12
\int_{\R/2\pi\Z}|\rho_1(s)-\rho_2(s)|\, ds \\
&
\leq \frac 12 \int_{(\R/2\pi\Z)\setminus S} |\rho_1(s)-\rho_2(s)|\, ds + \frac{h}{2}.
\end{align*}
We may therefore find $s\in \R/2\pi\Z$ such that $s\notin S$ and $|\rho_1(s)-\rho_2(s)|\geq h/{2\pi}$. We assume without loss of generality that $\rho_1(s)-\rho_2(s)\geq h/2\pi$, and by definition of $S$ we deduce
\begin{align*}
\inf_{I_{\delta h^2}(s) }\rho_1- \sup_{I_{\delta h^2}(s)}\rho_2\geq \frac h {4\pi} .
\end{align*}
In particular, setting $s_0=s-\pi/2 -3\delta h^2/4$, we have
\begin{align*}
&\inf_{I_{\delta h^2/4}(s_0+\pi/2) }\rho_1- \sup_{I_{\delta h^2/4}(s_0 +\pi/2 +\delta h^2)}\rho_2 \geq \frac h {4\pi} ,
\\
&\inf_{I_{\delta h^2/4}(s_0+\pi/2 +\delta h^2) }\rho_1- \sup_{I_{\delta h^2/4}(s_0+\pi/2)}\rho_2 \geq \frac h {4\pi}.
\end{align*}
As $\rho_j(s+\pi)=1-\rho_j(s)$ for a.e. $s\in\R/2\pi\Z$, this implies
\begin{align}
&\essinf_{I_{\delta h^2/4}(s_0+\pi/2) }\rho_1 + \essinf_{I_{\delta h^2/4}(s_0-\pi/2 +\delta h^2)}\rho_2 \geq 1 +  \frac h {4\pi} ,
\label{e:rho1rho2+}
\\
&\essinf_{I_{\delta h^2/4}(s_0 +\pi/2 +\delta h^2) }\rho_1 + \essinf_{I_{\delta h^2/4}(s_0-\pi/2)}\rho_2 \geq 1+ \frac h {4\pi}.
\label{e:rho1rho2-}
\end{align}
The relevance of \eqref{e:rho1rho2+}-\eqref{e:rho1rho2-} comes from the following geometric observation.
Given two directions $s_1\in I_{\delta h^2/4}(s_0+\pi/2)$ and $s_2\in I_{\delta h^2/4}(s_0-\pi/2+\delta h^2)$ 
and two points $y_1\in B_r(x_1)\cap \lbrace m\cdot e^{is_1}>0\rbrace$, $y_2\in B_r(x_2)\cap\lbrace m\cdot e^{is_2}>0\rbrace$, 
we have $|s_1-s_2|\geq \delta h^2$, and  the two lines $y_j +\R e^{is_j}$ intersect at a point  $z\in B_{8r/(\delta h^2)}$.
In the absence of dissipation, one would have $m(z)\cdot e^{is_j}>0$ for $j=1,2$, and therefore $m(z)\cdot e^{is_0}\geq \cos(2\delta h^2)\geq 1/2$.
The last lower bound is valid provided $\delta\leq \pi/24$,  since $|h|\leq 2$.
The same argument with $s_1\in I_{\delta h^2/4}(s_0+\pi/2+\delta h^2)$ 
and $s_2\in I_{\delta h^2/4}(s_0-\pi/2)$ 
 implies instead $m(z)\cdot e^{is_0}\leq -1/2$.

\medskip

Thanks to the techniques in \cite{marconi22structure}, in the presence of dissipation this can be made quantitative. 
The main idea is that \eqref{e:kin} provides an estimate on the difference between the `epigraph' $E_m$ defined in \eqref{e:def_Em}
%\begin{align*}
%E_m=\left\lbrace (x,s)\in\Omega\times\R/2\pi\Z\colon m(x)\cdot e^{is}>0\right\rbrace,
%\end{align*}
and its free transport $\FT(E_m,t)$, where the free transport operator $\FT(\cdot,t)$ is defined for $t\in\R$ by
\begin{align*}
\FT(E,t)=\left\lbrace (x,s)\in\Omega\times\R/2\pi\Z
\colon (x-t e^{is},s)\in E\right\rbrace.
\end{align*}
\begin{lem}\label{l:FT}
Let $t\in \R$ and $\rho>0$ such that $B_{\rho + |t|}\subset\Omega$.
For all  $\phi\in C_c^1(B_\rho\times \R/2\pi\Z)$ we have
\begin{align*}
\int_{\Omega\times\R/2\pi\Z}\phi(x,s)
\left( \mathbf 1_{\FT(E_m,t)}-\mathbf 1_{E_m}\right)
\, dxds
\leq 
\left( |t|\, \|\partial_s\phi\|_\infty +t^2\|\nabla_x\phi\|_\infty\right)
\nu(B_{\rho+|t|}).
\end{align*}
\end{lem}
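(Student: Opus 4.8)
The plan is to read the left-hand side as the difference between $f:=\mathbf 1_{E_m}$ and its free transport, and to express this difference as a $\tau$-integral of a quantity to which the kinetic equation \eqref{e:kin} applies directly. Unwinding the definition of $\FT$, we have $\mathbf 1_{\FT(E_m,t)}(x,s)=f(x-te^{is},s)$, so the integrand equals $f(x-te^{is},s)-f(x,s)$. Setting $g_\tau(x,s)=f(x-\tau e^{is},s)$, the fundamental theorem of calculus in $\tau$ reduces the statement to computing $\frac{d}{d\tau}\int\phi\,g_\tau$ and integrating from $0$ to $t$; since $\int\phi\,g_0=\int\phi\,\mathbf 1_{E_m}$, the difference of characteristic functions is recovered.

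The key manipulation is a change of variables $y=x-\tau e^{is}$ at fixed $s$, which moves the transport off of $f$ and onto the test function: $\int\phi(x,s)\,g_\tau(x,s)\,dx\,ds=\int\phi(y+\tau e^{is},s)\,f(y,s)\,dy\,ds$. Introducing $\psi_\tau(y,s):=\phi(y+\tau e^{is},s)$, which is again $C^1$ and supported in $y$ inside $B_{\rho+|\tau|}$, and differentiating under the integral sign (legitimate since $\phi\in C^1_c$ and $|f|\le 1$), one gets $\frac{d}{d\tau}\int\phi\,g_\tau=\int e^{is}\cdot\nabla_y\psi_\tau\,f\,dy\,ds$, using $(\nabla_x\phi)(y+\tau e^{is},s)=\nabla_y\psi_\tau(y,s)$. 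At this point the kinetic equation enters: testing \eqref{e:kin} against the admissible function $\psi_\tau$ and integrating by parts in $x$ (recall $e^{is}$ is $x$-divergence free) yields $\int f\,e^{is}\cdot\nabla_y\psi_\tau\,dy\,ds=\int\partial_s\psi_\tau\,d\sigma$.

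It then remains to estimate $\partial_s\psi_\tau$ and to track supports. By the chain rule, $\partial_s\psi_\tau(y,s)=\tau\,(e^{is})^\perp\cdot(\nabla_x\phi)(y+\tau e^{is},s)+(\partial_s\phi)(y+\tau e^{is},s)$, where the first term stems from differentiating $e^{is}$ in $s$; since $|(e^{is})^\perp|=1$ this gives $|\partial_s\psi_\tau|\le|\tau|\,\|\nabla_x\phi\|_\infty+\|\partial_s\phi\|_\infty$. As $\psi_\tau(\cdot,s)$ is supported in $B_{\rho+|\tau|}\subset B_{\rho+|t|}$ for every $\tau$ between $0$ and $t$, and $\nu(A)=|\sigma|(A\times\R/2\pi\Z)$, we obtain $\bigl|\int\partial_s\psi_\tau\,d\sigma\bigr|\le(|\tau|\,\|\nabla_x\phi\|_\infty+\|\partial_s\phi\|_\infty)\,\nu(B_{\rho+|t|})$. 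Integrating this bound over an interval of length $|t|$ produces the factor $\tfrac{t^2}{2}\|\nabla_x\phi\|_\infty+|t|\,\|\partial_s\phi\|_\infty$, which is dominated by $t^2\|\nabla_x\phi\|_\infty+|t|\,\|\partial_s\phi\|_\infty$; this yields the claim, in fact with absolute value on the left-hand side.

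The main difficulty is bookkeeping rather than conceptual. One must carefully justify pairing the distributional identity \eqref{e:kin} against the $\tau$-dependent test function $\psi_\tau$, checking in particular that $\psi_\tau$ remains compactly supported inside $\Omega$ uniformly for $\tau$ in the integration range — which is exactly what the hypothesis $B_{\rho+|t|}\subset\Omega$ ensures — and one must correctly account for the extra term $\tau\,(e^{is})^\perp\cdot\nabla_x\phi$ coming from the $s$-dependence of the transport direction $e^{is}$, since this is precisely the source of the quadratic $t^2$ contribution in the estimate.
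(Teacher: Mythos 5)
Your proof is correct and follows essentially the same route as the paper: both transfer the free transport onto the test function, use the kinetic equation to convert the resulting $x$-transport term into $\int \partial_s\psi_\tau\,d\sigma$, and conclude with the bound $|\partial_s\psi_\tau|\le \|\partial_s\phi\|_\infty+|\tau|\,\|\nabla_x\phi\|_\infty$ together with the support inclusion in $B_{\rho+|t|}$. Your change-of-variables/fundamental-theorem-of-calculus presentation is just a rigorous rendering of the paper's formal computation with $\partial_\tau[\psi\hat\chi]+e^{is}\cdot\nabla_x[\psi\hat\chi]=-\psi\,\partial_s\sigma$, and it even yields the slightly stronger two-sided estimate with the constant $t^2/2$.
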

\begin{proof}[Proof of Lemma~\ref{l:FT}]
Define $\chi,\chi^{\FT}\colon [-|t|,|t|]\times\Omega\times\R/2\pi\Z\to\R$ by
\begin{align*}
\chi(\tau,x,s)=\mathbf 1_{(x,s)\in E_m},\qquad \chi^{\FT}(\tau,x,s)=\mathbf 1_{(x,s)\in \FT(E_m,\tau)}
=\chi(x-\tau e^{is},s).
\end{align*}
so we have, in the sense of distributions,
\begin{align*}
\partial_\tau\chi +e^{is}\cdot\nabla_x \chi &=\partial_s\sigma(x,s),
\qquad
\partial_\tau\chi^{\FT} +e^{is}\cdot\nabla_x \chi^{\FT} =0.
\end{align*}
Setting $\hat\chi=\chi^{\FT}-\chi$, and $\psi(\tau,x,s)=\phi(x+e^{is}(t-\tau),s)$ which satisfies $\partial_\tau\psi +e^{is}\cdot\nabla_x\psi=0$, we deduce
\begin{align*}
\partial_\tau \left[\psi\hat\chi \right] +e^{is}\cdot\nabla_x \left[\psi\hat\chi \right] =-\psi \partial_s\sigma.
\end{align*}
Integrating with respect to $(x,s)$ (this is formal but makes sense distributionnally) we deduce
\begin{align*}
\frac{d}{d\tau} \int_{\Omega\times\R/2\pi\Z} \psi\hat\chi\, dx ds 
=\int_{\Omega\times\R/2\pi\Z} \partial_s\psi\, d\sigma(x,s).
\end{align*}
Integrating this from $0$ to $t$  and recalling $\nu(A)=|\sigma|(A\times\R/2\pi\Z)$ for $A\subset\Omega$, we obtain
\begin{align*}
\int_{\Omega\times\R/2\pi\Z}\phi(x,s)
\left( \mathbf 1_{\FT(E_m,t)}-\mathbf 1_{E_m}\right)
\, dxds
&=
\int_0^t \int_{\Omega\times \R/2\pi\Z} \partial_s\psi d\sigma\, d\tau \\
& \leq |t|\, \|\partial_s\psi\|_\infty |\nu|(B_{\rho+|t|}\times\R/2\pi\Z).
\end{align*}
Noting that $\|\partial_s\psi\|_\infty\leq \|\partial_s\phi\|_\infty +|t| \,\|\nabla_x\phi\|_\infty$ completes the proof.
\end{proof}

Equipped with Lemma~\ref{l:FT} we continue the proof of Proposition~\ref{p:weakdicho}. 
First we make use of \eqref{e:rho1rho2+}.
We define $\hat z\in\R^2$ as the intersection of the lines $x_1+\R e^{i(s_0+\pi/2)}$ and $x_2+\R e^{i(s_0-\pi/2+\delta h^2)}$, that is,
\begin{align*}
x_1+t_1 e^{i(s_0 +\pi/2)}=x_2+t_2 e^{i(s_0-\pi/2+\delta h^2)}=\hat z,
\end{align*}
for some $t_1,t_2\in\R$.
Since $|x_1-x_2|\leq 4r$, we have
\begin{align}\label{e:est_t_i}
|t_1|,|t_2|\leq \frac{4r}{\sin(\delta h^2)}\leq \frac{8r}{\delta h^2},
\end{align}
and therefore  $B_r(\hat z)\subset B_{R/2}$.
We will use Lemma~\ref{l:FT} to compare $E_m$ with $\FT(E_m,t_1)$ and $\FT(E_m,t_2)$ on $B_r(\hat z)$. 
We define
\begin{align*}
C_1 
&= B_r(\hat z)\times I_{c}(s_0+\pi/2), \qquad C_2 = B_r(\hat z)\times I_{c}(s_0-\pi/2+\delta h^2), \\
A_1
&
=E_m\cap C_1, \qquad A_2 = E_m \cap C_2
\end{align*}
with $c= \delta h^3/128\pi \le \delta h^2/4$.
and their free transport counterparts
\begin{align*}
A_1^{\FT}
=\FT(E_m,t_1)\cap C_1, \qquad
A_2^{\FT}
=\FT(E_m,t_2)\cap C_2.
\end{align*}
We estimate 
\begin{align*}
|A_1^{\FT}| = &~  | E_m \cap \FT(\cdot, t_1)^{-1}(C_1)| \\
\ge & ~ |E_m \cap (B_r(x_1)\times I_c(s_0 + \pi/2))| - | \FT(\cdot,t_1)^{-1}(C_1) \setminus  (B_r(x_1)\times I_c(s_0 +\pi/2))| \\
= &~ \int_{s_0 +\frac\pi2 -c}^{s_0 +\frac\pi2 +c}\rho_1(s) ds - | \FT(\cdot,t_1)^{-1}(C_1) \setminus  (B_r(x_1)\times I_c(s_0 +\pi/2))|.
\end{align*}
Moreover
\begin{align*}
| \FT(\cdot,t_1)^{-1}(C_1) \setminus  (B_r(x_1)\times I_c(s_0 +\pi/2))| 
&= 
 \int_{s_0 +\frac\pi2 -c}^{s_0 +\frac\pi2 +c}|B_r(\hat z - t_1 e^{is}) \setminus B_r(x_1) | ds \\
&
\le  2r \int_{s_0 +\frac\pi2 -c}^{s_0 +\frac\pi2 +c} | \hat z - t_1 e^{is} - x_1| ds \\
&
\le 
2r \int_{s_0 +\frac\pi2 -c}^{s_0 +\frac\pi2 +c} |t_1| | e^{i (s_0 + \pi/2)}-e^{is}| ds \\
&
\le 
32 \frac{c^2 r^2}{\delta h^2},
\end{align*}
where in the last inequality we used \eqref{e:est_t_i}.
Therefore we have
\begin{align}\label{e:A1FT}
|A_1^{\FT}| \ge  \int_{s_0 +\frac\pi 2 -c}^{s_0 +\frac\pi2 +c}\rho_1(s) ds -32 \frac{c^2 r^2}{\delta h^2},
\end{align}
and similarly
\begin{align}\label{e:A2FT}
|A_2^{\FT}| \ge  
\int_{  s_0 -\frac\pi 2 +\delta h^2 -c}^{ s_0 - \frac\pi2 + \delta h^2 +c}
\rho_2(s) ds -32 \frac{c^2 r^2}{\delta h^2}.
\end{align}

From \eqref{e:rho1rho2+} we know that
\begin{align*}
\rho_1\left(s_0+ \frac\pi 2 +s\right) + \rho_2\left(s_0-\frac\pi 2 +\delta h^2 +s\right)\geq 1+\frac h{4\pi}\qquad\text{for all }|s|\leq \frac{\delta}{4}h^2.
\end{align*}
Integrating this inequality in $s\in[-c,c]$, it follows from \eqref{e:A1FT},\eqref{e:A2FT} that
\begin{align}\label{e:A1A2FT}
|A_1^{\FT}| +|A_2^{\FT}|\geq 2c|B_r|\left(1+\frac h{4\pi}\right) -64 \frac{c^2 r^2}{\delta h^2}\ge  2c|B_r|\left(1+\frac h{8\pi}\right),
\end{align}
by the choice $c=\delta h^3/128\pi$.
Next we consider two cases, depending on whether $A_1$ and $A_2$ satisfy a similar inequality. 

\medskip

\noindent\textbf{Case 1.} Assume first that
\begin{align*}
|A_1|+|A_2|\geq 2c|B_r|\left(1+\frac h{16\pi}\right),
\end{align*}
then
\begin{align*}
|\pi_x(A_1)|+ |\pi_x(A_2)|\geq  |B_r|\left(1+ \frac{h}{16\pi}\right).
\end{align*}
Moreover, since $\pi_x(A_1) \cup \pi_x(A_2) \subset B_r(\hat z)$, it follows that $A:= \pi_x(A_1) \cap \pi_x(A_2)$ satisfies $|A|\ge h|B_r|/16$.
By construction, we have
\begin{align*}
A &=\Big\lbrace x\in B_r(\hat z)\colon  \exists s_1\in I_{c}(s_0+\pi/2),\, s_2\in I_{c}(s_0-\frac\pi 2 +\delta h^2),\\
&\hspace{7em} m(x)\cdot e^{is_1}>0\text{ and }m(x)\cdot e^{is_2}>0 \Big\rbrace \\
& \subset B_r(\hat z) \cap \lbrace m\cdot e^{is_0} \geq \cos(2\delta h^2)\rbrace,
\end{align*}
so this implies
\begin{align}\label{e:case1+}
\left| B_{R/2}\cap \left\lbrace m\cdot m_0\geq \frac 12\right\rbrace
\right|\gtrsim hr^2.
\end{align}

\medskip

\noindent\textbf{Case 2.} Assume now that 
\begin{align*}
|A_1|+|A_2|< 2c |B_r|\left(1+\frac h{16\pi}\right).
\end{align*}
Then using \eqref{e:A1A2FT} we obtain
\begin{align*}
|A_1^{\FT}|-|A_1| + |A_2^{\FT}|-|A_2| 
>
2c |B_r| \frac h{16\pi},
\end{align*}
so either $|A_1^{\FT}|-|A_1|$ or $|A_2^{\FT}|-|A_2|$ is larger than half the right-hand side. We consider without loss of generality only the first case:
\begin{align*}
|A_1^{\FT}|-|A_1|  
>
|B_r| \frac{ch}{16\pi}.
\end{align*}
This implies a lower bound on the entropy dissipation $\nu(B_{R})$ thanks to Lemma~\ref{l:FT}. 
Specifically, we apply Lemma~\ref{l:FT} to $t=t_1$ and $\phi\in C_c^\infty(B_{2r(\hat z)}\times I_{2c}(s_0+\pi/2))$ such that
\begin{align*}
\mathbf 1_{x\in B_r(\hat z)}\mathbf 1_{s\in I_{c}(s_0+\pi/2)}
\leq
\phi(x,s)
\leq 
\mathbf 1_{x\in B_{(1+\e)r}(\hat z)}\mathbf 1_{s\in I_{(1+\e)c}(s_0+\pi/2)},
\end{align*}
and $|\partial_s\phi|\leq 2/(\e c)$, $|\nabla_x\phi|\leq 2/(\e r)$.
We choose $\e=h/192\pi$ to ensure
\begin{align*}
\left|
\left(B_{(1+\e)r}(\hat z)\times I_{(1+\e)c}\right)
\setminus
\left(B_{r}(\hat z)\times I_{c}\right)
\right| \leq \frac{ch}{32\pi}|B_r|.
\end{align*}
Since $|t_1|\leq 8r/(\delta h^2)$ and $B_{2r+|t_1|}(\hat z)\subset B_R$, we deduce $\nu(B_R)\gtrsim \delta^3 h^{11} r \gtrsim h^{11}r$.

\medskip

Similarly, using \eqref{e:rho1rho2-} we have two cases: either 
\begin{align}\label{e:case1-}
\left| B_{R/2}\cap \left\lbrace m\cdot m_0\leq -\frac 12\right\rbrace
\right|\gtrsim hr^2,
\end{align}
or $\nu(B_R)\gtrsim  h^{11} r$. 
So gathering all cases, we see that either both \eqref{e:case1+} and \eqref{e:case1-} are satisfied, or $\nu(B_R)\gtrsim h^{11}r$, 
which is exactly the dichotomy of Proposition~\ref{p:weakdicho}.
\qed

\section{Proof of Proposition~\ref{p:nonVMOimprov}}\label{s:nonVMOimprov}

In order to prove Proposition~\ref{p:nonVMOimprov}, we briefly recall from \cite{marconi21ellipse} the notion of \emph{Lagrangian representation} of an entropy solution $m$ of the eikonal equation.
In \cite{marconi21ellipse,marconi21micromag} the second author shows the existence of a finite non-negative Radon measure $\omega$ on the set of curves 
\begin{align*}
\Gamma= \Big\{ (\gamma,t^-_\gamma,t^+_\gamma)\colon
& 0\le t^-_\gamma\le t^+_\gamma\le 1, \\
&
\gamma=(\gamma_x,\gamma_s)\in \BV((t^-_\gamma,t^+_\gamma);\Omega \times \R/2\pi \Z),
\\
&
 \gamma_x \text{ is Lipschitz} \Big\},
\end{align*} 
 with the following three properties:
\begin{itemize}
\item for every $t\in (0,1)$, the pushforward of $\omega$, restricted to the section
$\Gamma(t)=\lbrace (\gamma,t^-_\gamma,t^+_\gamma)\in\Gamma\colon t_\gamma^- < t <t_\gamma^+\rbrace$, by the evaluation map $e_t\colon \gamma\mapsto \gamma(t)$ (a right-continuous representative of $\gamma_s$ is always considered), is uniform on the `epigraph' $E_m=\lbrace m(x)\cdot e^{is}>0\rbrace$, that is,
\begin{equation}\label{e:lag}
( e_t)_\sharp \left[ \omega \lfloor  \Gamma(t)\right]= \mathbf 1_{m(x)\cdot e^{is}>0}\, dx\, ds ;
\end{equation}
\item the measure $\omega$ is concentrated on curves $(\gamma,t^-_\gamma,t^+_\gamma)\in  \Gamma$ solving the characteristic equation,
\begin{equation}\label{e:charac}
\dot\gamma_x(t)= e^{i \gamma_s(t)}\qquad\text{for a.e. }t\in (t^-_\gamma,t^+_\gamma);
\end{equation}
\item the entropy dissipation measure \eqref{e:nu} disintegrates along the Lagrangian curves as
\begin{equation}\label{e:disint}
 \nu(A)=
 \int_\Gamma \mu_\gamma (\gamma_x^{-1}(A))\, d\omega(\gamma)\qquad\text{for all measurable }A\subset\Omega,
\end{equation}
 where $\mu_\gamma =|D_t\gamma_s|$, with the convention that a jump of $\gamma_s$ from $s^-$ to $s^+$ at time $t_0\in (t_\gamma^-,t_\gamma^+)$ contributes $\dist_{\R/2\pi\Z}(s^-,s^+)\delta_{t=t_0}$ to the jump part of $\mu_\gamma$ (see \cite[Proposition~2.5]{marconi21ellipse}).
\end{itemize}
Moreover, the Lagrangian property \eqref{e:lag} implies that $\omega$ is concentrated on curves $\gamma$ such that $\gamma_x(t)$ is a Lebesgue point of $m$ with $m(\gamma_x(t))\cdot e^{i\gamma_s(t^+)}>0$, for a.e. $t\in (0,1)$  \cite[Lemma~2.7]{marconi21ellipse}.
We denote by $\Gamma_g\subset\Gamma$ the full-measure subset of Lagrangian curves which satisfy that property together with the characteristic equation \eqref{e:charac}.

The proof of Proposition~\ref{p:nonVMOimprov} is based on two main tools.
The first, Lemma~\ref{l:dichocurve}, is a dichotomy stating that either Lagrangian curves passing through a given set create a lot of dissipation, or one can find an almost-straight Lagrangian curve passing through that set. 
The second (\cite[Lemma~5.2]{stab}, a slightly more precise version of \cite[Lemma~3.1]{marconi21ellipse}, itself adapted from \cite[Lemma~22]{marconi21micromag}) is another dichotomy: given an almost-straight Lagrangian curve, either the density of points at which $m$ lies in the semi-circle indicated by the $s$-component of that curve is high, or a lot of dissipation must be created.
The succession of these two dichotomies is reflected in the three alternatives in the conclusion of Proposition~\ref{p:nonVMOimprov}.
We first state and prove the first tool, and then proceed to the proof of Proposition~\ref{p:nonVMOimprov}.

\begin{lem}\label{l:dichocurve}
For any $R>0$ such that $B_R\subset\Omega$, any measurable set $A\subset B_{R}\times\R/2\pi\Z$, and any $\eta\in (0,1)$, we have either
\begin{align}\label{e:Adissip}
\nu(B_R)\gtrsim \frac{\eta}{ R}\left|\lbrace (x,s)\in A\colon m(x)\cdot e^{is}>0\rbrace\right|,
\end{align}
or there exists a curve $\gamma\in\Gamma_g$ and a connected component $J$ of $\gamma_x^{-1}(B_R)$ such that
\begin{align*}
%\label{e:Acurve}
J\cap\gamma^{-1}(A)\neq \emptyset
\quad\text{and}\quad
\mu_\gamma(J)\leq \eta.
\end{align*}
\end{lem}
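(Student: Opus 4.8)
The plan is to use the disintegration formula \eqref{e:disint} together with the Lagrangian representation \eqref{e:lag} to set up a dichotomy based on averaging over the measure $\omega$. The key quantity to control is the measure of the ``good part'' of $A$, namely
\begin{align*}
G := \left\lbrace (x,s)\in A\colon m(x)\cdot e^{is}>0\right\rbrace = A\cap E_m.
\end{align*}
The idea is that \eqref{e:lag} lets me express $|G|$ as an integral over Lagrangian curves of the time spent in $G$, so that if $|G|$ is large, either many curves spend substantial time in $G$, or the dissipation they accumulate is large. First I would fix a time slice, say $t=1/2$ (or integrate over a range of times to be safe), and use \eqref{e:lag} to write $|G| = \int_\Gamma |\{t: \gamma(t)\in G\}|\, d\omega(\gamma)$ up to the restriction to $\Gamma(t)$; more precisely, integrating \eqref{e:lag} in $t$ over $(0,1)$ gives an identity of the form $|G| \cdot (\text{time factor}) = \int_\Gamma |\{t\in(t_\gamma^-,t_\gamma^+): \gamma(t)\in G\}|\, d\omega(\gamma)$.

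Next I would split the curves according to whether the dissipation they create inside $B_R$ is small. Define the set of curves
\begin{align*}
\Gamma_{\text{bad}} = \left\lbrace \gamma\in\Gamma_g\colon \mu_\gamma(J)>\eta \text{ for every connected component } J \text{ of } \gamma_x^{-1}(B_R) \text{ meeting } \gamma^{-1}(A)\right\rbrace.
\end{align*}
The dichotomy is then: either the contribution to $|G|$ coming from curves in $\Gamma_{\text{bad}}$ accounts for all of $|G|$ — in which case I can bound $|G|$ by the total dissipation divided by $\eta$, since each such curve contributes dissipation at least $\eta$ per relevant component, while the time such a curve can spend in $A\subset B_R$ within a single component is at most $2R$ (because $\dot\gamma_x = e^{i\gamma_s}$ has unit speed, so the curve traverses $B_R$ in time at most the diameter $2R$) — or else there is at least one curve \emph{not} in $\Gamma_{\text{bad}}$ that meets $\gamma^{-1}(A)$, which is exactly the second alternative. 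The unit-speed property \eqref{e:charac} is what converts the ``time in $B_R$'' into the geometric factor $R$, yielding the $\eta/R$ scaling in \eqref{e:Adissip}.

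The main obstacle I anticipate is the careful bookkeeping in the first alternative: I need to bound, for each bad curve, the total time spent in $G$ by a uniform multiple of its dissipation. Within one connected component $J$ of $\gamma_x^{-1}(B_R)$ the time in $A$ is at most $|J|\le 2R$ by unit speed, and bad curves have $\mu_\gamma(J)>\eta$ on each \emph{relevant} component, so summing $2R \le (2R/\eta)\,\mu_\gamma(J)$ over the relevant components and integrating against $d\omega$ gives, via \eqref{e:disint}, the bound $|G|\lesssim (R/\eta)\,\nu(B_R)$, which rearranges to \eqref{e:Adissip}. The delicate points are (i) ensuring only components meeting $\gamma^{-1}(A)$ are counted, so that the dissipation lower bound $\mu_\gamma(J)>\eta$ actually applies where it is needed, and (ii) handling the time-integration factor from \eqref{e:lag} cleanly — integrating the Lagrangian identity over $t$ introduces a bounded constant that I would absorb into the implicit constant in $\lesssim$. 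Once these are in place, the contrapositive immediately produces the desired good curve when the dissipation bound \eqref{e:Adissip} fails.
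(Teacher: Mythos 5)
Your overall architecture matches the paper's: argue by contrapositive, decompose $\gamma_x^{-1}(B_R)$ into connected components, use $\mu_\gamma(J)>\eta$ on each component meeting $\gamma^{-1}(A)$, and then integrate via the disintegration \eqref{e:disint} and the Lagrangian property \eqref{e:lag} (your time-integration of \eqref{e:lag} is fine and in fact produces the exact identity $\int_\Gamma T(\gamma)\,d\omega(\gamma)=|A\cap E_m|$ with constant $1$). However, there is a genuine gap at the central step: you claim that a connected component $J$ of $\gamma_x^{-1}(B_R)$ satisfies $|J|\le 2R$ because the curve has unit speed and therefore ``traverses $B_R$ in time at most the diameter.'' This is false. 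The characteristic equation \eqref{e:charac} forces unit speed but not straightness: $\gamma_s$ may vary, so the curve can wind around inside $B_R$ for a time far exceeding $2R$. The vortex solution is the canonical counterexample --- its Lagrangian curves are circles of radius $\rho<R$ that remain in $B_R$ for their entire lifespan. With $|J|$ unbounded in terms of $R$, your bookkeeping $|J|\le 2R\le (2R/\eta)\,\mu_\gamma(J)$ breaks down and the $\eta/R$ scaling in \eqref{e:Adissip} does not follow.

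The missing ingredient, which the paper supplies, is a quantitative version of the observation that a unit-speed curve can only stay in $B_R$ for a long time by turning a lot. From \eqref{e:charac} one gets
\begin{align*}
|\gamma_x(t_2)-\gamma_x(t_1)-e^{i\gamma_s(t_1)}(t_2-t_1)|\leq \mu_\gamma([t_1,t_2])\,|t_2-t_1|,
\end{align*}
so any interval $J$ with $\gamma_x(J)\subset B_R$ and $|J|\ge 4R$ must have $\mu_\gamma(J)\ge 1/2$. One then splits into two cases: if $|J_k|\le 4R$, the hypothesis $\mu_\gamma(J_k)>\eta$ already gives $\mu_\gamma(J_k)\gtrsim \eta|J_k|/R$; if $|J_k|\ge 4R$, cutting $J_k$ into subintervals of length between $4R$ and $8R$ and applying the above on each gives $\mu_\gamma(J_k)\gtrsim |J_k|/R\ge \eta|J_k|/R$ (using $\eta<1$). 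Only after this case distinction does summing over $k$ and integrating against $\omega$ yield \eqref{e:Adissip}. You should incorporate this step; the rest of your plan then goes through as written.
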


\begin{proof}[Proof of Lemma~\ref{l:dichocurve}]
%Define the subset of Lagrangian curves
%\begin{align*}
%G=\left\lbrace \gamma\in\Gamma_g\colon \exists t\in (t_\gamma^-,t_\gamma^+),\,\gamma(t)\in A\right\rbrace,
%\end{align*}
Assume that the second alternative of Lemma~\ref{l:dichocurve} is not verified: for every curve $\gamma\in \Gamma_g$ and every connected component $J$ of $\gamma_x^{-1}(B_R)$ intersecting $\gamma^{-1}(A)$, we have $\mu_\gamma(J) >\eta$. 
Then we claim that
\begin{align}\label{e:muT}
\mu_\gamma(\gamma_x^{-1}(B_R)) &\gtrsim \frac{\eta }{ R}T(\gamma),
\qquad
T(\gamma) =\left|
\lbrace t\in (t_\gamma^-,t_\gamma^+)\colon \gamma(t)\in A \rbrace
\right|,
%\nonumber
\end{align}
 for all $\gamma\in \Gamma_g$.
 To prove \eqref{e:muT}, denote by $J_k=(t_k^-,t_k^+)$ the connected components of $\gamma_x^{-1}(B_R)$ which intersect $\gamma^{-1}(A)$. 
We show next that $\mu_\gamma(J_k)\gtrsim \eta |J_k|/ R$ for all $k$.
On the one hand, if $|J_k|\leq 4R$ then $\mu_\gamma(J_k)\gtrsim \eta |J_k|/ R$ because $\mu_\gamma(J_k)> \eta$ by assumption.
On the other hand, from the characteristic equation \eqref{e:charac}
and the definition of $\mu_\gamma=|D_t\gamma_s|$, we have the inequality
\begin{align*}
|\gamma_x(t_2)-\gamma_x(t_1)-e^{i\gamma_s(t_1)}(t_2-t_1)|\leq \mu_\gamma([t_1,t_2])|t_2-t_1|,
\end{align*}
and we deduce that in any
 any interval $J\subset (0,1)$ such that $\gamma_x(J)\subset B_R$ and $|J|\geq 4R$, we must have $\mu_\gamma(J) \geq 1/2$.
Therefore, if $|J_k|\geq 4R$, cutting $J_k$ in disjoint subintervals of length between $4R$ and $8R$, we obtain that $\mu_\gamma(J_k)  \gtrsim |J_k|/R \geq \eta |J_k|/ R$. 
So we have 
\begin{align*}
\mu_\gamma(\gamma_x^{-1}(B_R)) \geq \sum_k \mu_\gamma(J_k) \gtrsim \frac{\eta}{R}\sum_k |J_k|,
\end{align*}
which implies \eqref{e:muT} since $\gamma^{-1}(A)\subset \bigcup_k J_k$.
From \eqref{e:muT} and the fact that $\omega(\Gamma\setminus \Gamma_g)=0$ we infer
\begin{align*}
\nu(B_{R})&= \int_{\Gamma} \mu_\gamma(\gamma_x^{-1}(B_R ))\, d\omega(\gamma) 
\gtrsim \frac{\eta}{R}\int_{\Gamma} T(\gamma)\, d\omega(\gamma),
\end{align*}
where the first equality comes from the disintegration \eqref{e:disint}.
Making use of the Lagrangian property \eqref{e:lag} 
to rewrite the last expression, 
we see that it is precisely equal to the right-hand side of \eqref{e:Adissip}, which concludes the proof of Lemma~\ref{l:dichocurve}.
\end{proof}

\begin{proof}[Proof of Proposition~\ref{p:nonVMOimprov}]
We  recall that $m_0=e^{is_0}$ and the sets $X_\pm$ are defined by
\begin{align*}
X_\pm =B_{R/2}\cap \lbrace \pm m\cdot e^{is_0}\geq 1/2\rbrace.
\end{align*}
For any $\hat s\in [s_0-\pi/4,s_0+\pi/4]$, we apply Lemma~\ref{l:dichocurve} to $A(\hat s)=B_{R/2}\times I_\eta(\hat s)$, where $I_\eta(\hat s)=[\hat s-\eta,\hat s+\eta]$. If $\eta\in (0,\pi/12)$ then we have $m(x)\cdot e^{is}>0$ for all $(x,s)\in X_+\times I_\eta(\hat s)$, and therefore
\begin{align*}
\left|\lbrace (x,s)\in A(\hat s)\colon m(x)\cdot e^{is}>0\rbrace\right|
\geq \eta |X_+|.
\end{align*}
So we have either $\nu(B_R)\gtrsim\eta^2|X_+|/R$, or there exists a curve $\gamma\in\Gamma_g$ and  a connected component $J$ of $\gamma_x^{-1}(B_R)$ intersecting $A(\hat s)$ such that $\mu_\gamma(J)<\eta$. In that second case, applying \cite[Lemma~5.2]{stab} we deduce that either $\nu(B_R)\gtrsim \eta^3 R$ or $|B_R\cap \lbrace m\cdot e^{i\hat s}\geq -2\eta\rbrace|\gtrsim \eta R^2$. 
 We fix $\eta =1/4$ and summarize the preceding discussion: for all $\hat s\in [s_0-\pi/4,s_0+\pi/4]$, we have 
\begin{align*}
\nu(B_R)\gtrsim \frac{|X_+|}{R},\quad\text{ or }\nu(B_R)\gtrsim R,\quad\text{ or }
\left|B_R\cap \lbrace m\cdot e^{i\hat s}\geq -1/2\rbrace\right|\gtrsim R^2.
\end{align*}
Similarly, for all $\hat s\in [s_0+3\pi/4,s_0+5\pi/4]$, we have
\begin{align*}
\nu(B_R)\gtrsim \frac{|X_-|}{R},\quad\text{ or }\nu(B_R)\gtrsim R,\quad\text{ or }
\left|B_R\cap \lbrace m\cdot e^{i\hat s}\geq -1/2\rbrace\right|\gtrsim R^2.
\end{align*}
We conclude that we have either \eqref{e:dissipXpm}, or $\nu(B_R)\gtrsim R$, or
\begin{align*}
\left|B_R\cap \lbrace m\cdot e^{i s}\geq -1/2\rbrace\right|\gtrsim R^2,
\end{align*}
for all $ s\in [s_0-\pi/4,s_0+\pi/4]\cup [s_0+3\pi/4,s_0+5\pi/4]$. This corresponds exactly to the three alternatives in the statement of Proposition~\ref{p:nonVMOimprov}.
\end{proof}

\bibliographystyle{acm}
\bibliography{aviles_giga}

\end{document}